\documentclass[a4paper, 11pt]{article}

\usepackage[titletoc]{appendix}
\usepackage[T2A]{fontenc}
\usepackage[english]{babel}
\usepackage{hyperref}
\usepackage{amssymb}
\usepackage{amsmath}
\usepackage{amsthm}
\usepackage[shortlabels]{enumitem}
\usepackage{float}

\usepackage{indentfirst}

\usepackage{xcolor}

\usepackage{mathrsfs} %\mathscr

\usepackage[framemethod=TikZ]{mdframed}
\newmdenv[
    frametitle=TO-DO,
    frametitlefont=\scshape,
    backgroundcolor=lightgray, 
    roundcorner=6pt,
    leftmargin=30,
    rightmargin=30,
    linecolor=red,
    linewidth=1pt,
    skipabove=3pt,
    skipbelow=4pt,
    font=\sffamily\itshape
]{TODO}

\usepackage{orcidlink}

\usepackage{tikz}
\newtheorem{thm}{Theorem}[section]
\newtheorem{prop}[thm]{Proposition}
\newtheorem{lemm}[thm]{Lemma}

\theoremstyle{definition}
\newtheorem{defn}[thm]{Definition}

\newtheorem{exampl}[thm]{Example}

\theoremstyle{remark}

\DeclareMathOperator{\interior}{int}
\newcommand*{\closure}[1]{\overline{#1}}

\DeclareMathOperator{\distance}{dist}

\newcommand{\bR}{\mathbb{R}}

\newcommand{\bN}{\mathbb{N}}

\newcommand{\cB}{\mathcal{B}}

\newcommand{\cF}{\mathcal{F}}
\newcommand{\cG}{\mathcal{G}}

\newcommand*{\norm}[1]{\left\Vert #1\right\Vert} % norma
\newcommand*{\abs}[1]{\left\vert #1 \right\vert} % moduł | ... |
\newcommand*{\set}[2]{\left\{#1\colon #2\right\}} % zbiór {x: P(x)}
\newcommand*{\mtr}[3][X]{\left\vert #2 - #3\right\vert_{#1}}

\newcommand*{\ball}[2][r]{B\left(#2, #1\right)}
\newcommand*{\cball}[2][r]{\overline{B}\left(#2, #1\right)}

\newcommand*\diff{\mathop{}\!\mathrm{d}}

\newcommand*{\net}[3][\bN]{\left({#2}_{#3}\right)_{#3\in #1}}

\newcommand*{\cinv}[2]{\left[#1\,;#2\right]}
\newcommand*{\oinv}[2]{\left(#1\,;#2\right)}
\newcommand*{\ocinv}[2]{\left(#1\,;#2\right]}
\newcommand*{\coinv}[2]{\left[#1\,;#2\right)}

\DeclareMathOperator{\Lip}{Lip}
\DeclareMathOperator{\lip}{lip}
\newcommand*{\LLip}{\mathop{\mathbb{L}\mathrm{ip}}}

\newcommand*{\lipnorm}[1]{\left\Vert #1\right\Vert_{\mathrm{lip}}}

\newcommand*{\Neig}[1]{\mathcal{N}\left(#1\right)}

\newcommand*{\charfunction}[1]{\chi_{#1}}

\newcommand*{\lebme}{\ell}

\newcommand*{\CantorSet}{\mathbf{C}}

\title{On local Lipschitz one sets}
\author{Ziemowit M. Wójcicki}
\date{\today}

\begin{document}

\maketitle

\begin{abstract}
    We study the local Lipschitz one subsets of a finite dimensional space,
    that is, sets for which
    there exists a continuous function whose
    local Lipschitz derivative is the characteristic
    function of said set. We give a characterization
    of a local Lipschitz one set on the real line in terms
    of a certain measure-theoretic density condition, 
    which we call quasi-density.
    We show that any local Lipschitz one set 
    needs to be quasi-dense, but the converse does not hold.
    Finally, we show that any regular closed subset of a normed space
    is a local Lipschitz one set, but there exist local Lipschitz one
    sets that are not regular closed.
\end{abstract}

 \tableofcontents

\section{Introduction}

In this paper, we extend the investigation of certain questions
about the so-called little and big Lip functions, hereinafter
called Lipschitz derivatives, to the function $\LLip f$, which
in the context of our research we call the local Lipschitz 
derivative of a given function $f$.

In the setting of metric spaces devoid of linear structure,
Lipschitz derivatives seem to be a viable alternative to the traditional derivatives.
Indeed, for a differentiable function $f\colon\bR\to\bR$ it is easly seen
from the definition that $\Lip f(x)=\abs{f'(x)}$, $x\in\bR$.
In practice, Lipschitz derivatives are useful tools for investigating different properties of
differentiability or Lipschitz-like conditions of functions.
The big Lipschitz derivative is an older concept
that served investigations of classical differentiability.
For example, the classical Stepanoff's theorem \cite{Stepanoff} 
asserts that any function $f\colon\bR^n\to\bR^m$
is differentiable at almost every point of the set 
$\set{x\in\bR^n}{\Lip f(x)<\infty}$ and dates back to 1923.
On the other hand, in \cite{HerMasl},
Herasymchuk and Maslyuchenko used both big and local Lipschitz derivatives
in their investigation of separately locally Lipschitz functions.
The concept of the little Lipschitz derivative is a more recent phenomenon,
typical for nonsmooth analysis.
A notable application of the big and the little Lipschitz derivatives in this area
is the theorem by Balogh, Rogovin and Z\"{u}rcher \cite[Theorem 2.11]{BRZ}.
This theorem asserts that if
$(X,\mu)$ is a doubling metric measure space % (for the definition, see section \ref{sec:MMS}) 
and there exists a constant $K>0$ such that
for any Lipschitz function $f\colon X\to\bR$,
$\Lip f\leq K\lip f$ $\mu$-a.e.,
then $X$ admits the so-called differentiable structure.
% (see \cite{BRZ,Cheeger,Keith}).
Roughly speaking, $X$ can be decomposed into a countable collection of 
``coordinate charts'' $(X_{\alpha}, \varphi_{\alpha})$,
where $X_{\alpha}$ are measurable and the ``coordinate''
function $\varphi_{\alpha}$ is Lipschitz for any $\alpha$.
Those structures were introduced by Cheeger in \cite{Cheeger}.
See also \cite{BRZ,Keith}.
% such that for any function $f$ there exists unique measurable, 

It is natural to consider ``antiderivatives'' in 
the sense of little and big Lipschitz derivatives.
More precisely, given a function $g$, is there a 
continuous function such that $\lip f=g$ ($\Lip f=g$)?
Recently, there has been an interest in the particular case
where the function $g$ is a characteristic function of a given set.
In \cite{BHMV}, Buczolich, Hanson, Maga and V\'{e}rtesy showed that on the real line, such
sets can be characterized in terms of certain modified Lebesgue density properties.
We extend this line of investigation to the local Lipschitz derivative
and ask whether for a given subset $F$ of a metric space 
(in particular, a normed space),
there exists a continuous function $f$ such that $\LLip f$ is the characteristic
function of $F$.
In section \ref{sec:MMS}, we introduce the concept of quasi-density.
This property is simpler than those introduced in \cite{BHMV}
but can be reasonably defined on any metric measure space and
turns out to be suitable for the study of the operator $\LLip$.
We show that on the real line, local Lipschitz one sets have
a simple characterization in terms of quasi-density. Note that an analogous
condition in \cite{BHMV} characterizes \emph{monotone} big Lipschitz one
sets in terms of \emph{weak} and \emph{strong density}.
We start with the monotone case (Theorem \ref{thm:q_dense_iff_monot_LLip_one}), 
but Theorem \ref{thm:LLip_on_IR_is_q_dense} shows that
 local Lipschitz one
sets on the real line are exactly closed quasi-dense sets.

In section \ref{sec:QDRN}, we study the quasi-density in $\bR^N$.
Promisingly any local Lipschitz subset of $\bR^N$ turns out to be
a quasi-dense set. However, we give an example of a quasi-dense subset of $\bR^2$,
that is not a local Lipschitz one set.
In section \ref{sec:RCS}, we show that any regular closed subset of
a normed space is a local Lipschitz one set.  On the other hand, we 
construct a local Lipschitz one set in $\bR^2$ that is not regular closed.

\section{Lipschitz derivatives and Lipschitz one sets}

Given a metric space $X$, we denote the distance between
points $x,y\in X$ by $\mtr[X]{x}{y}$ and by
$\Neig{x}=\set{U\in 2^X}{x\in \interior U}$
the family of all neighborhoods
of a point $x$ in $X$. For $A\subseteq X$,
we denote $\distance(x,A)=\inf\limits_{a\in A}\mtr[X]{x}{a}$.

One way to define the local Lipschitz condition is to say
that a function $f\colon X\to Y$, where $X$ and $Y$ are metric
spaces, is \emph{locally Lipschitz} at a point $x\in X$ if and only if
$\LLip f(x)<\infty$, where 
\begin{equation}\label{eq:LLipDefinition}
    \LLip f(x)=\inf_{r>0}\sup_{\substack{u,v\in\ball[r]{x} \\ u\neq v}}\frac{\mtr[Y]{f(u)}{f(v)}}{\mtr[X]{u}{v}}.
\end{equation}
We call $\LLip f$ the \emph{local Lipschitz derivative of $f$}.
% $\LLip f(x)$ is the infimum of Lipschitz
% constants of $f\vert_{U}$ taken over all possible neighbors $U$ of $x$.

Given metric spaces $X$, $Y$ and a function $f\colon X\to Y$ as before,
we consider the functions $\Lip f$ and $\lip f$, called
the \emph{big} and \emph{little Lipschitz derivatives} respectively and
defined by equations
\begin{align}
   \Lip f(x) &=\limsup_{u\to x}\frac{\mtr[Y]{f(u)}{f(x)}}{\mtr[X]{u}{x}}, \label{eq:LipDefinition} \\
   \lip f(x) &=\liminf_{r\to 0+}\frac{1}{r}\sup_{u\in\ball[r]{x}}\mtr[Y]{f(u)}{f(x)}, \; \ x\in X. \label{eq:lipDefinition}
\end{align}
 %  \begin{equation}\label{eq:LipDefinition}
 %  %    \Lip f(x)=\inf_{r>0}\sup_{u\in\ball[r]{x}\setminus\{x\}}\frac{\mtr[Y]{f(u)}{f(x)}}{\mtr[X]{u}{x}}=
 %  %    \limsup_{u\to x}\frac{\mtr[Y]{f(u)}{f(x)}}{\mtr[X]{u}{x}},
 %      \Lip f(x)=    \limsup_{u\to x}\frac{\mtr[Y]{f(u)}{f(x)}}{\mtr[X]{u}{x}},
 %  \end{equation}
 %  \begin{equation}\label{eq:lipDefinition}
 %      %\lip f(x)=\liminf_{r\to 0+}\Lip^r f(x),.
 %      \lip f(x)=\liminf_{r\to 0+}\frac{1}{r}\sup_{u\in\ball[r]{x}}\mtr[Y]{f(u)}{f(x)}, \; x\in X.
 %  \end{equation}
Function $f$ is \emph{pointwise Lipschitz} at $x\in X$ if $\Lip f(x)<\infty$.
We call functions $f$ such that $\lip f(x)<\infty$ \emph{weakly pointwise Lipschitz}.
Note that $\LLip f$ can be expressed as:
\begin{equation}\label{eq:LLipAlternateDefinition}
    \LLip f(x)=\limsup_{\substack{(u,v)\to (x,x) \\ u\ne v}}\frac{\mtr[Y]{f(u)}{f(v)}}{\mtr[X]{u}{v}}, \; x\in X.
\end{equation}
It is easily seen from \eqref{eq:LipDefinition}, \eqref{eq:lipDefinition}
and \eqref{eq:LLipAlternateDefinition} that $\lip f\leq \Lip f\leq \LLip f$.
For more details on basic properties of Lipschitz derivatives, 
see for example \cite{ClassLip}.

\begin{defn}\label{def:local_Lipschitz_one_set}
We call a subset $E$ of a metric space $X$ a \emph{local Lipschitz one set}
if there exists a continuous function $f\colon X\to\bR$ such that
$\LLip f=\charfunction{E}$, where $\charfunction{E}$ stands for the characteristic
function of the set $E$.
\end{defn}

\section{Metric measure spaces and quasi-density}\label{sec:MMS}
Throughout this paper, $\lebme_N$ denotes the 
$N$-dimensional Lebesgue measure.
We will also write $\lebme$ instead of $\lebme_1$.
% $\Neigop{x}=\set{U\in\Neig{x}}{U\text{ is open}}$.

Given a metric space $X$, and a Borel regular outer measure
$\mu$ on $X$, such that $0<\mu(B)<\infty$ for any open ball
$B$ in $X$, we say that the space $(X,\mu)$ is a \emph{metric measure space}.
A metric measure space $(X,\mu)$ is called a \emph{Vitali space} (see \cite[Remark 1.13]{AnMetSp}
and \cite[Section 3.4]{SobolMMS}),
if for any set $A\subseteq X$ and family $\cF$ of closed
balls, such that for any $a\in A$ there exists $r_a>0$ such that  $\cball[r_a]{a}\in\cF$
and $\inf\set{r>0}{\cball[r]{a}\in\cF}=0$, %  holds for any $\in A$
then there exists countable disjoint subfamily $\cG$ of $\cF$ such that 
\[
    \mu\left(A\setminus\bigcup\cG\right)=0.
\]
If $\mu$ is any Radon measure on $\bR^N$, then $(\bR^N,\mu)$ is a Vitali space
(\cite[Remark 1.13]{AnMetSp}, \cite[Theorem 2.8]{Mattila}). 
Since $n$-dimensional Lebesgue measure is a Radon
measure(see \cite{Mattila} for the details) on 
$\bR^N$, $(\bR^N,\lebme_N)$ is an example of a Vitali space.
Other important example of Vitali space is $\bR^N$ equipped
with the $s$-dimensional Hausdorff measure (see \cite{Falc}). 
This is known, as the Vitali covering theorem for Hasudorff measure.
Note that Hausdorff measure is not necessarily a Radon measure
(see \cite{Mattila}).

The following version of the Lebesgue differentiation theorem holds
for a Vitali space $(X,\mu)$ (see \cite[Theorem 1.8, Remark 1.13]{AnMetSp}):

\textit{If $f\colon X\to\bR$ is a nonnegative locally integrable function,
then
\[
    \lim_{r\to 0^{+}}\frac{1}{\mu\big(\ball[r]{x}\big)}\int_{\ball[r]{x}}f(t)\diff\mu(t)=f(x),
\]
for almost every $x\in X$.}

Considering a subset $E$ of the space $(X,\mu)$ and putting
$f=\charfunction{E}$ in the above, the Lebesgue differentiation theorem yields
the following
\begin{thm}[Lebesgue density theorem]\label{thm:Leb_density_mmspace}
    Let $(X,\mu)$ be a Vitali space and $E\subseteq X$.
    Then, for almost every $x\in X$,
    \[
        d_{E}(x):=\lim_{r\to 0^{+}}\frac{\mu\big(E\cap\ball[r]{x}\big)}{\mu(\ball[r]{x})}=\charfunction{E}(x).
    \]
    In particular, 
    $\mu\Big(\big(E\setminus D(E)\big)\cup\big(D(E)\setminus E\big)\Big)=0$.
\end{thm}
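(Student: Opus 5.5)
The plan is to apply the Lebesgue differentiation theorem stated just above, for the Vitali space $(X,\mu)$, to the function $f=\charfunction{E}$. I read $D(E)$ as the set of points where the density $d_E(x)$ exists and equals $1$.

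The first step is to make the integrability hypothesis available. The function $\charfunction{E}$ is nonnegative and bounded by $1$. Since $\mu(B)<\infty$ for every open ball $B$, it is integrable over every ball. If $E$ is not $\mu$-measurable, I would replace $E$ by a Borel hull $\tilde E\supseteq E$ with $\mu(\tilde E\cap B)=\mu(E\cap B)$ for every ball $B$. Such a hull exists because $\mu$ is Borel regular: take hulls of $E\cap B_n$ for an exhausting sequence of balls, or argue ball by ball. This reduction is the only delicate point, and I would keep it brief. If needed I would simply note that the statement is understood for measurable $E$, since the differentiation theorem is stated for locally integrable functions.

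With integrability in hand, the theorem gives
\[
\lim_{r\to0^+}\frac{1}{\mu(\ball[r]{x})}\int_{\ball[r]{x}}\charfunction{E}\diff\mu=\charfunction{E}(x)
\]
for $\mu$-a.e. $x$. The left-hand side is exactly $\mu\big(E\cap\ball[r]{x}\big)/\mu(\ball[r]{x})$, which proves the first claim.

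For the ``in particular'' part, let $N$ be the exceptional null set. If $x\in E\setminus N$, then $d_E(x)=1$, so $x\in D(E)$. If $x\in D(E)\setminus N$, then $\charfunction{E}(x)=d_E(x)=1$, so $x\in E$. Hence the symmetric difference of $E$ and $D(E)$ is contained in $N$, and therefore has $\mu$-measure zero.
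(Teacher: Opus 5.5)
Your main line is exactly the paper's argument: the paper obtains the theorem simply by putting $f=\charfunction{E}$ into the Lebesgue differentiation theorem. Your derivation of the ``in particular'' clause, that the symmetric difference lies in the exceptional null set, is the routine step the paper leaves implicit.

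The one step that fails is the Borel-hull reduction for non-measurable $E$, so you should drop it. The hull $\tilde E$ preserves the left-hand side, since $\mu(\tilde E\cap\ball[r]{x})=\mu(E\cap\ball[r]{x})$. However, the differentiation theorem applied to $\charfunction{\tilde E}$ then gives $d_E=\charfunction{\tilde E}$ a.e., not $d_E=\charfunction{E}$ a.e. The set $\tilde E\setminus E$, where these differ, need not be null for the outer measure.

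In fact no reduction can work, because the statement is false for non-measurable sets. Let $E\subseteq\bR$ be a Bernstein set. Every compact subset of $E$ and of $\bR\setminus E$ is countable, so both have inner measure zero in every interval. Hence the Lebesgue outer measure satisfies $\lebme(E\cap\ball[r]{x})=2r$ for all $x$ and $r$, and so $d_E\equiv 1$. But then $d_E\neq\charfunction{E}$ on all of $\bR\setminus E$, which has full outer measure in every interval, and likewise $D(E)\setminus E=\bR\setminus E$ is not null.

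So measurability of $E$ is a genuine hypothesis. The paper assumes it tacitly and only ever applies the theorem to closed sets. Your fallback, ``the statement is understood for measurable $E$'', is therefore the correct and necessary choice, not merely a convenience.
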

%We will use Theorem \ref{thm:Leb_density_mmspace} in.
The number $d_{E}(x)$ defined above is called the \emph{Lebesgue density of $E$ at a point $x$}.
We will denote $D(E)=\set{x\in X}{d_{E}(x)=1}$.
Obviously, $D(E)\subseteq\closure{E}$.

Recall that if $\net An$ is a sequence of sets in a
given space $X$ and $x\in X$, we write $A_n\to x$ if for 
any  $U\in\Neig{x}$ there exists $n_0\in\bN$ such that 
$A_n\subseteq U$ for any $n\geq n_0$.
\begin{defn}\label{def:q_density}
    Let $(X,\mu)$ be a metric measure space.
    We say that a set $E\subseteq X$ is \emph{quasi-dense} 
    if for any $x\in E$,
    there exists a sequence $\net{A}{n}$ of subsets of $X$
    such that $A_n\to x$, $\mu(A_n)>0$, and
    \[
        \lim_{n\to\infty}\frac{\mu(E\cap A_n)}{\mu(A_n)}=1.
    \]
\end{defn}
 %  Recall, that for any $x\in\bR^N$, the 
 %  Lebesgue density of set $E\subseteq\bR$
 %  at point $x$ is defined as the number
 %  \[
 %      d_E(x)=\lim_{\varepsilon\to 0}\frac{\mu(E\cap\ball[\varepsilon]{x})}{\mu{\ball[\varepsilon]{x}}}.
 %  \]
 %  Obviously $0\leq d_E(x)\leq 1$.
 %  A point $x$ is called a \emph{point of density} of set $E$,
 %  when $d_E(x)=1$.
 %  Denote by $D(E)$ the set of all density points of set $E$. Obviously, $D(E)\subseteq\closure{E}$.

\begin{prop}\label{prop:charact_closed_qdense_in_v_space}
    Let $F$ be a closed subset of a Vitali space $(X,\mu)$. % metric measure space in which Lebesgue's theorem holds
    Then, the following conditions are equivalent:
    \begin{enumerate}[label=\normalfont{(\roman*)}]
        \item $F$ is quasi-dense,
        \item for any $x\in F$ and open $U\in\Neig{x}$
              we have $\mu(F\cap U)>0$,
        \item $\closure{D(F)}=F$,
        \item for any $x\in F$ there exists a sequence $\net{B}{n}$ of open balls in $X$
              such that ${B_n\to x}$ and
              $
                  \lim\limits_{n\to\infty}\frac{\mu(F\cap B_n)}{\mu(B_n)}=1.
              $
    \end{enumerate}
\end{prop}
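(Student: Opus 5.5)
I will prove the cycle (iv)$\Rightarrow$(i)$\Rightarrow$(ii)$\Rightarrow$(iii)$\Rightarrow$(iv).

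\emph{(iv)$\Rightarrow$(i).} This is immediate. Open balls have positive measure by the definition of a metric measure space, so the sequence $\net{B}{n}$ from (iv) witnesses quasi-density at every $x\in F$.

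\emph{(i)$\Rightarrow$(ii).} Fix $x\in F$ and an open $U\in\Neig{x}$, and take $\net{A}{n}$ as in Definition \ref{def:q_density}. Since $A_n\to x$, we have $A_n\subseteq U$ for large $n$. The ratio $\mu(F\cap A_n)/\mu(A_n)$ tends to $1$, so for large $n$ it is positive, giving $\mu(F\cap A_n)>0$. Monotonicity of the outer measure then yields $\mu(F\cap U)\ge\mu(F\cap A_n)>0$.

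\emph{(ii)$\Rightarrow$(iii).} Since $F$ is closed, $\closure{D(F)}\subseteq\closure{F}=F$. For the reverse inclusion, let $x\in F$ and let $U$ be an open neighbourhood of $x$. By Theorem \ref{thm:Leb_density_mmspace}, the set $F\setminus D(F)$ is $\mu$-null. Hence
\[
\mu\big(D(F)\cap U\big)\ge\mu(F\cap U)-\mu\big((F\setminus D(F))\cap U\big)=\mu(F\cap U)>0 .
\]
This uses subadditivity in the form $\mu(F\cap U)\le\mu(D(F)\cap U)+\mu((F\setminus D(F))\cap U)$. In particular $D(F)\cap U\neq\emptyset$, so $x\in\closure{D(F)}$.

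\emph{(iii)$\Rightarrow$(iv).} This is the step needing a little care. Let $x\in F=\closure{D(F)}$. For each $n$, choose $y_n\in D(F)$ with $\mtr{x}{y_n}<1/n$. Since $d_F(y_n)=1$, there is a radius $r_n\in(0,1/n)$ with
\[
\frac{\mu\big(F\cap\ball[r_n]{y_n}\big)}{\mu\big(\ball[r_n]{y_n}\big)}>1-\frac1n .
\]
Put $B_n=\ball[r_n]{y_n}$. Then $B_n\subseteq\ball[2/n]{x}$, so $B_n\to x$, and the ratio tends to $1$.

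The main (mild) obstacle is measurability. Theorem \ref{thm:Leb_density_mmspace} is stated for arbitrary $E$ via the outer measure, but $F$ is closed and hence Borel, so all sets involved are measurable and the subadditivity and monotonicity steps are unproblematic.
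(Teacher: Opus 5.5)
Your proposal is correct and follows the paper's proof essentially step by step. It uses the same cycle (i)$\Rightarrow$(ii)$\Rightarrow$(iii)$\Rightarrow$(iv)$\Rightarrow$(i), with two cosmetic differences: you prove (i)$\Rightarrow$(ii) directly rather than by contradiction, and in (ii)$\Rightarrow$(iii) you use subadditivity where the paper observes $U\cap F\nsubseteq F\setminus D(F)$. Your closing remark on measurability is harmless but not needed, since every step uses only monotonicity and subadditivity of the outer measure $\mu$.
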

\begin{proof}
    (i)$\Rightarrow$(ii) Assume that for some $x\in F$
    and open $U\in\Neig{x}$ we would have $\mu(F\cap U)=0$.
    Choose a sequence
    $\net{A}{n}$ as in definition \ref{def:q_density}.
    Then, for some $n_0\in\bN$, $A_{n}\subseteq U$ whenever $n\geq n_0$.
    But then
    \[
        0=\lim\limits_{n\to\infty}\frac{\mu(U\cap F)}{\mu(A_n)}\geq
        \lim\limits_{n\to\infty}\frac{\mu(A_n\cap F)}{\mu(A_n)}=1,
    \]
    which is an obvious contradiction.

    (ii)$\Rightarrow$(iii) Consider $x\in F$ and an 
    open neighborhood $U$ of $x$ in $X$.
    We have $\mu(U\cap F)>0$. But Theorem \ref{thm:Leb_density_mmspace}
    implies that $\mu(F\setminus D(F))=0$, so necessarily
    $U\cap F\nsubseteq F\setminus D(F)$. Then, there exists $y\in U\cap F$
    such that $y\notin F\setminus D(F)=F\cap(X\setminus D(F))$.
    Thus, $y\notin X\setminus D(F)$ or, equivalently $y\in D(F)$.
    So, $y\in U\cap D(F)$. Thus, $U\cap D(F)\neq\varnothing$ for any open $U\in \Neig{x}$. Hence,
     $x\in\closure{D(F)}$.
    Therefore, $F\subseteq\closure{D(F)}$.
    
    Now, since $F$ is closed, we also have 
    $\closure{D(F)}\subseteq\closure{F}=F$.
%   Now fix $x\in\closure{D(F)}$ and some $U\in\Neig{x}$.
%   Then, 
%   $\mu(U\cap D(F))\geq\mu(U\cap F)>0$,
%   since $F\subseteq\closure{D(F)}$, as we have already shown.
%   So $U\cap D(F)\nsubseteq D(F)\setminus F$ and there exists
%   $y\in U\cap D(F)$ such that $y\notin D(F)\setminus F$.
%   We have $y\in U\cap D(F)\cap F\subseteq U\cap F$, so
%   $U\cap F\neq\varnothing$. This proves, that $x\in\closure{F}=F$.
%   %since $F$ is closed set.

    (iii)$\Rightarrow$(iv)
    Fix $x\in F$. 
    Since $\closure{D(F)}=F$,
    there exists a sequence $(a_n)_{n=1}^{\infty}$
    in $D(F)$ such that $\mtr[X]{a_n}{x}<\frac{1}{n}$, $n\in\bN$.
    So, for any $n\in\bN$, we have that
    \[
        \lim_{r\to 0^{+}}\frac{\mu(F\cap\ball[r]{a_n})}{\mu(\ball[r]{a_n})}=\charfunction{F}(a_n)=1.
    \]
    Thus, there is $r_n\in\oinv{0}{\frac{1}{n}}$ such that
    \[
        1-\frac{1}{n} < \frac{\mu(F\cap B_n)}{\mu(B_n)} \leq 1,
    \]
    where $B_n=\ball[r_n]{a_n}$.
    So $\frac{\mu(F\cap B_n)}{\mu(B_n)} \to 1$. On the other hand, if $y\in B_n$,
    then %$\mtr[X]{y}{x}\leq\mtr[X]{y}{a_n}+\mtr[X]{a_n}{x} < \frac{2}{n}$.
    \[\mtr[X]{y}{x}\leq\mtr[X]{y}{a_n}+\mtr[X]{a_n}{x} < \frac{2}{n}.\]
    Therefore $B_n\to x$.

    (iv)$\Rightarrow$(i) obvious.
\end{proof}

\begin{exampl}\label{ex:FCS}
    We construct a symmetric fat Cantor set
    and demonstrate that it is quasi-dense.
    We base our construction on the one 
    approach given in \cite[p. 39]{Folland}. 
    Let $I_1=\cinv{0}{1}$ and let
    $\alpha=(\alpha_n)_{n=1}^{\infty}$
    be a sequence of positive real numbers
    such that $\sum_{n=1}^{\infty}2^{n-1}\alpha_n<1$.
    Assume that we have $2^{n-1}$ disjoint closed intervals 
    $I_{2^{n-1}}$, $\ldots$, $I_{2^{n}-1}$ of equal length.
    We create $2^{n}$ intervals
    $I_{2^n}$, $\ldots$, $I_{2^{n+1}-1}$:
    for each $k=0,\ldots,2^{n-1}-1$,
    by removing from the interval 
    $I_{2^{n-1}+k}$,
    the open subinterval of the length $\alpha_n$, 
    centered at the middle of the interval
    $I_{2^{n-1}+k}$.
    Let $C_n=I_{2^n}\cup I_{2^n+1}\cup\ldots\cup I_{2^n+2^{n-1}-1}$
%  \[
%      C_n=\bigcup_{k=0}^{2^n-1}I_{2^n+k}, \text{ for any }n.
%  \]
    We define the fat Cantor set to be the intersection of sets $C_n$:
    $
        \CantorSet_{\alpha} = 
        \bigcap_{n=0}^{\infty}C_n.
    $
    One can readily verify that
    $\lebme(\CantorSet_{\alpha}) = 1 - \sum_{n=1}^{\infty}2^{n-1}\alpha_n>0$.
    
    It remains to show that $\CantorSet_{\alpha}$ is quasi-dense.
    Observe that  $\lebme(\CantorSet_{\alpha}\cap I_{2^{n}+k})=\lebme(\CantorSet_{\alpha}\cap I_{2^{n}+l})$ for any
    $n\in\bN$ and $k, l\in\{0,\ldots,2^{n-1}-1\}$. Therefore
     \[
     \lebme(\CantorSet_{\alpha}\cap I_{2^{n}+k})=\frac{1}{2^n}\lebme(\CantorSet_{\alpha}), \text{ for any } n\in\bN, k=0,\ldots,2^{n-1}.
     \]
     Fix $x\in\CantorSet_{\alpha}$ and $U=\oinv{x-\varepsilon}{x+\varepsilon}$,
     $\varepsilon>0$. Choose $n$ such that
     $\frac{1}{2^n}<\varepsilon$ and $k$ such that
     $x\in I_{2^n+k}$. Since $I_{2^n+k}\subseteq U$, we have
     \[
        \lebme(\CantorSet_{\alpha}\cap U)\geq\lebme(\CantorSet_{\alpha}\cap I_{2^{n}+k})=\frac{1}{2^n}\lebme(\CantorSet_{\alpha})>0.
     \]
     Thus, $\CantorSet_{\alpha}$ is quasi-dense by Proposition \ref{prop:charact_closed_qdense_in_v_space}. %(ii).
     \qed
\end{exampl}

In fact, it is sufficient
to have a set of positive measure 
in the Vitali space in order to find
a quasi-dense set, as shown
in the following % Proposition.
\begin{prop}\label{prop:qdense_subset_of_posm_set}
    Let $A$ be a closed subset of a Vitali space $(X,\mu)$.
    If $\mu(A)>0$, then there exists a closed quasi-dense
    subset $B$ of the set $A$ such that $\mu(A\setminus B)=0$.
\end{prop}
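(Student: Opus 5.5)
Define $B$ as the support of $\mu$ restricted to $A$: remove from $A$ every point that has an open neighbourhood meeting $A$ in a $\mu$-null set. Then $B$ is closed, and we check condition (ii) of Proposition \ref{prop:charact_closed_qdense_in_v_space}. A convenient alternative is $B=\closure{D(A)}$. Since $A$ is closed, $D(A)\subseteq\closure{A}=A$, so $B\subseteq A$ and $B$ is closed. The main work is to show $\mu(A\setminus B)=0$ and that $B$ is quasi-dense.

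\textbf{Step 1: $\mu(A\setminus B)=0$.} By Theorem \ref{thm:Leb_density_mmspace} we have $\mu(A\setminus D(A))=0$. Since $D(A)\subseteq B$, it follows that $A\setminus B\subseteq A\setminus D(A)$, which is null. As a consequence, $\mu(B)=\mu(A)>0$, and more precisely $\mu(A\cap U)=\mu(B\cap U)$ for every measurable $U$.

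\textbf{Step 2: $B$ satisfies condition (ii).} Fix $x\in B$ and an open $U\in\Neig{x}$. Because $x\in\closure{D(A)}$, there is $y\in U\cap D(A)$, so $d_A(y)=1$. Choose $r>0$ with $\ball[r]{y}\subseteq U$ and $\mu(A\cap\ball[r]{y})>\tfrac12\mu(\ball[r]{y})>0$; the last inequality uses that open balls have positive measure in a metric measure space. By Step 1,
\[
\mu(B\cap U)\geq\mu(B\cap\ball[r]{y})=\mu(A\cap\ball[r]{y})>0.
\]
Proposition \ref{prop:charact_closed_qdense_in_v_space} then gives that $B$ is quasi-dense.

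The only delicate point is measurability in Step 1: we need $\mu(A\cap U)=\mu(B\cap U)$. This holds because $A$, $B$ and $U$ are Borel and $\mu$ is Borel regular, so subtracting the null set $A\setminus B$ is legitimate. Beyond that, the proof reduces directly to Theorem \ref{thm:Leb_density_mmspace} and Proposition \ref{prop:charact_closed_qdense_in_v_space}.
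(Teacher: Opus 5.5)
Your proof is correct. The paper uses exactly the first set you mention, $B=A\setminus G$ with $G=\bigcup\set{U}{U\text{ open and }\mu(A\cap U)=0}$, and only asserts that the verification is easy. Your $B=\closure{D(A)}$ is in fact the same set in a Vitali space. A density point of $A$ cannot lie in $G$, and $A\setminus G$ is closed, so $\closure{D(A)}\subseteq A\setminus G$. Conversely, every open neighbourhood of a point of $A\setminus G$ meets $A$ in positive measure, hence meets $D(A)$.

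The difference between the two descriptions is where the work sits. With the paper's description, $\mu(A\cap U)>0$ is built into the definition. The real step, hidden behind ``easy to see'', is $\mu(A\cap G)=0$. It is needed both for $\mu(A\setminus B)=0$ and to pass from $\mu(A\cap U)>0$ to $\mu(B\cap U)>0$. Since $G$ is a union of possibly uncountably many open sets, this requires either a Lindel\"of argument or the observation that every point of $A\cap G$ has $d_A=0$, i.e.\ $A\cap G\subseteq A\setminus D(A)$. With your description, the null difference is immediate from Theorem \ref{thm:Leb_density_mmspace}. Condition (ii) then follows by the same density-point argument as (ii)$\Rightarrow$(iii) in Proposition \ref{prop:charact_closed_qdense_in_v_space}. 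So your write-up effectively supplies the details the paper omits.

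One small simplification: your measurability caveat is unnecessary. Monotonicity and subadditivity of the outer measure already give $\mu(B\cap U)=\mu(A\cap U)$ for every $U$.
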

\begin{proof}
    Let $G=\bigcup\set{U}{U\text{ is open and }\mu(A\cap U)=0}$ and let $B=A\setminus G$. Then it is easy to see
    that $B$ is a quasi-dense subset of $A$.
\end{proof}

\begin{exampl}
    We now show an example of a set that is nowhere dense and
    simultaneously quasi-dense in more general metric space.
    
    Let $(X,\mu)$ be a separable Vitali space with $\mu$
    non-atomic and $B=\cball[r]{a}$ for some $a\in X$ and $r>0$.
    Moreover, let $\set{x_n\in X}{n\in\bN}$ be a dense
    subset of $B$.
    Since $\mu$ is non-atomic, $\lim\limits_{r\to 0}\mu\big(\cball[r]{x_n}\big)=0$ for any $n\in\bN$. 
    Hence there exists $\rho_n>0$ such that
    \[
        \mu\big(\cball[\rho_n]{x_n}\big) < \frac{\mu(B)}{3^n}.
    \]
    Then, the set
    \[
        E=B\setminus\bigcup_{n=1}^{\infty}\ball[\rho_n]{x_n}
    \]
    is a closed nowhere dense set.
    Moreover, 
    \begin{align*}
        \mu(E) 
        &= \mu(B)-\mu\left(\bigcup_{n=1}^{\infty}\cball[\rho_n]{x_n}\cap B\right) \\
        &\geq\mu(B)-\sum_{n=1}^{\infty}\mu\big(\cball[\rho_n]{x_n}\big) \\
        &\geq \mu(B)\left(1-\sum_{1}^{\infty}\frac{1}{3^n}\right) \\
        &=\mu(B)\left(1-\frac{1}{2}\right) = \frac{1}{2}\mu(B)>0.
    \end{align*}
    By Proposition \ref{prop:qdense_subset_of_posm_set} there
    exists a quasi-dense closed subset $F$ of set $E$.
    \qed
\end{exampl}

%   \begin{exampl}
%       Let $I=\cinv{0}{1}$ and let $\set{x_n}{n\in\bN}$
%       be a dense subset of $I$. We define $G$ to be a set
%       \begin{TODO}
%           Finish.
%       \end{TODO}
%       Let $F=I\setminus \bigcup G$.
%   \end{exampl}

\section{Local Lip one sets on the real line}

\begin{thm}\label{thm:q_dense_iff_monot_LLip_one}
    Let $F$ be a closed subset of $\bR$. Then,
    $F$ is quasi-dense, if and only if there exists
    a monotone continuous function $f\colon\bR\to\bR$
    such that $\LLip f=\charfunction{F}$.
\end{thm}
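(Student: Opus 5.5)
For the ``if'' direction, assume $f$ is monotone (say nondecreasing; otherwise replace $f$ by $-f$) and continuous with $\LLip f=\charfunction{F}$. I will verify condition (ii) of Proposition \ref{prop:charact_closed_qdense_in_v_space}.

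Suppose, to the contrary, that there is $x\in F$ and an open interval $U\ni x$ with $\lebme(F\cap U)=0$. On the open set $U\setminus F$ we have $\LLip f=0$. So each point of $U\setminus F$ has a neighbourhood on which $f$ is Lipschitz with arbitrarily small constant, and hence $f$ is constant on each component of $U\setminus F$. Thus $f'=0$ on $U\setminus F$.

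Since $\LLip f\le 1$ everywhere on $U$, $f$ is $1$-Lipschitz on compact subintervals of $U$. A short covering argument gives this: a function that is locally $(1+\varepsilon)$-Lipschitz on an interval is globally $(1+\varepsilon)$-Lipschitz there. Hence $f$ is absolutely continuous on $U$, with $f'=0$ almost everywhere on $U\setminus F$ and $\lebme(F\cap U)=0$. Therefore $f'=0$ a.e. on $U$, so $f$ is constant on $U$. This forces $\LLip f(x)=0$, contradicting $x\in F$. Monotonicity is in fact not needed here.

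For the ``only if'' direction, let $F$ be closed and quasi-dense, and define
\[
f(x)=\int_0^x\charfunction{F}(t)\diff t .
\]
Then $f$ is nondecreasing, continuous and $1$-Lipschitz, so $\LLip f\le 1$ everywhere. I check the two remaining cases.

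If $x\notin F$, then some interval around $x$ misses $F$ because $F$ is closed. On that interval $f$ is constant, so $\LLip f(x)=0$.

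If $x\in F$, I use condition (iv) of Proposition \ref{prop:charact_closed_qdense_in_v_space}, which gives intervals $B_n=\oinv{u_n}{v_n}\to x$ with $\lebme(F\cap B_n)/\lebme(B_n)\to1$. Then
\[
\frac{f(v_n)-f(u_n)}{v_n-u_n}=\frac{\lebme(F\cap B_n)}{\lebme(B_n)}\to 1,
\]
and $(u_n,v_n)\to(x,x)$ with $u_n\ne v_n$. By \eqref{eq:LLipAlternateDefinition} this gives $\LLip f(x)\ge1$, so $\LLip f(x)=1$.

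The main obstacle is the local-to-global Lipschitz step in the ``if'' direction, namely that $\LLip f\le1$ on an interval implies $f$ is $1$-Lipschitz there. I plan to handle it with a compactness and chaining argument along a segment $[a,b]$. For each $\varepsilon>0$, cover $[a,b]$ by finitely many balls on which $f$ is $(1+\varepsilon)$-Lipschitz, then pick a chain of intermediate points so that consecutive points lie in a common ball. Summing the increments along the chain bounds $\abs{f(b)-f(a)}$ by $(1+\varepsilon)(b-a)$, and letting $\varepsilon\to0$ finishes the step.
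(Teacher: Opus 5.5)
Your proof is correct and follows the paper's route. The ``only if'' direction uses the same primitive of $\charfunction{F}$ together with condition (iv) of Proposition~\ref{prop:charact_closed_qdense_in_v_space}; your signed integral $\int_0^x\charfunction{F}(t)\diff t$ is in fact cleaner than the paper's $\lebme(\cinv{a}{x}\cap F)$, which misbehaves for $x<a$. The ``if'' direction is the absolute-continuity argument of Theorem~\ref{thm:LLip_on_IR_is_q_dense}, and your compactness-and-chaining proof that $\LLip f\le 1$ makes $f$ $1$-Lipschitz simply replaces the paper's citation of that fact.
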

\begin{proof}
    The ``if'' part will be proven in Theorem \ref{thm:LLip_is_q_dense}
    with greater generality.

    For the ``only if`` part, assume, that $F$ is quasi-dense.
    Fix some $a\in\bR$ and
    let $f(x)=\lebme(\cinv{a}{x}\cap F)$, $x\in \bR$.
    Note that $\abs{f(u)-f(v)}=\lebme(\ocinv{u}{v}\cap F)$, $u,v\in\bR$, $u\leq v$
    and the function $f$ is obviously monotone.
    Fix $x\notin F$. Then $\distance(x,F)>0$ and for
    $0<r<\distance(x,F)$ we have $\ball[r]{x}\cap F=\varnothing$.
    % we have $F\cap\ocinv{u}{v}=\varnothing$
    %for any $u,v\in\ball[r]{x}=\oinv{x-r}{x+r}$, $u<v$.
    Hence,
    \begin{align*}
        \LLip f(x)&=\inf_{r>0}\sup_{\substack{u,v\in\ball[r]{x} \\ u<v}}\frac{\abs{f(u)-f(v)}}{\abs{u-v}} \\
        &=\inf_{r>0}\sup_{\substack{u,v\in\ball[r]{x} \\ u<v}}\frac{\lebme\big(F\cap\ocinv{u}{v}\big)}{\lebme\big(\ocinv{u}{v}\big)} = 0.
%       &=\inf_{r>0}\sup_{\substack{u,v\in\ball[r]{x} \\ u<v}}\frac{\lebme(F\cap\cinv{u}{v})}{\lebme(\cinv{u}{v})}=0.
    \end{align*}
    
    Consider $x\in F$. 
    For any $u<v$,
    \[
        \frac{\abs{f(u)-f(v)}}{\abs{u-v}}=\frac{\lebme\big(F\cap\ocinv{u}{v}\big)}{\lebme(\ocinv{u}{v})}\leq 1.
    \]
 %  For any sequence $(u_n,v_n)_{n=1}^{\infty}$ of
 %  numbers, where $u_n< v_n$, $n\in\bN$, we necessarily have
 %  $\frac{\lebme(F\cap\cinv{u_n}{v_n})}{\lebme(\cinv{u_n}{v_n})}\leq 1$.
 %  So, if there exist a limit of a sequence 
 %  $\frac{\abs{f(u_n)-f(v_n)}}{\abs{u_n-v_n}}$,
 %  then
 %  \[
 %      \lim_{n\to\infty}\frac{\abs{f(u_n)-f(v_n)}}{\abs{u_n-v_n}}=\lim_{n\to\infty}\frac{\lebme(F\cap\cinv{u_n}{v_n})}{\lebme(\cinv{u_n}{v_n})}\leq 1.
 %  \]
 %  On the other hand, by quasi-density of $F$, there exists such
 %  sequence of intervals $I_n=\cinv{u_n}{v_n}$, that $I_n\to x$ and
 %  \[
 %      \lim_{n\to\infty}\frac{\abs{f(u_n)-f(v_n)}}{\abs{u_n-v_n}}=
 %      \lim_{n\to\infty}\frac{\lebme(F\cap\cinv{u_n}{v_n})}{\lebme(\cinv{u_n}{v_n})}=
 %      \lim_{n\to\infty}\frac{\lebme(F\cap I_n)}{\lebme(I_n)}=1.
 %  \]
 %  It follows, that
    On the other hand, by quasi-density of $F$ and the condition
    (iv) of Proposition~\ref{prop:charact_closed_qdense_in_v_space}, 
    there exists sequence of open balls $B_n=\oinv{u_n}{v_n}$
    such that $B_n\to x$ and
    $\frac{\lebme\big(F\cap B_n\big)}{\lebme(B_n)}\to 1$.
    It follows, that
    \begin{align*}
        1\geq \LLip f(x) &= \limsup_{(u,v)\to(x,x)}\frac{\abs{f(u)-f(v)}}{\abs{u-v}}
        \geq \lim_{n\to\infty}\frac{\abs{f(u_n)-f(v_n)}}{\abs{u_n-v_n}} \\
        &= \lim_{n\to\infty} \frac{\lebme\big(F\cap B_n\big)}{\lebme(B_n)} = 1.\qedhere
    \end{align*}
 %  Thus, we have shown, that $\LLip f=\charfunction{F}$.
\end{proof}

\begin{thm}\label{thm:LLip_on_IR_is_q_dense}
    Let $F$ be a closed subset of $\bR$ such that there exists
    a continuous function $f$ satisfying $\LLip f=\charfunction{F}$. 
    Then $F$ is quasi-dense.
\end{thm}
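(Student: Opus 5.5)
By Proposition~\ref{prop:charact_closed_qdense_in_v_space}, it suffices to verify condition (ii): for every $x\in F$ and every open $U\in\Neig{x}$ we have $\lebme(F\cap U)>0$. I argue by contradiction. Suppose that for some $x\in F$ there is an open interval $J=\oinv{x-\delta}{x+\delta}$ with $\lebme(F\cap J)=0$. The goal is to show that $f$ is then locally $0$-Lipschitz near $x$. That would give $\LLip f(x)=0$, contradicting $\LLip f(x)=\charfunction{F}(x)=1$.

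First I record the local behaviour of $f$ on $J$. Since $\LLip f\le 1$ everywhere, for every $y\in J$ there is a neighbourhood on which $f$ is $(1+\varepsilon)$-Lipschitz. A compactness argument on subintervals then shows that $f$ is locally Lipschitz on $J$. In particular $f$ is absolutely continuous on compact subintervals of $J$ and differentiable almost everywhere there.

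Next, at points $y\in J\setminus F$ we have $\LLip f(y)=0$, hence $\abs{f'(y)}\le \LLip f(y)=0$ wherever $f'(y)$ exists. Since $\lebme(F\cap J)=0$, this gives $f'=0$ almost everywhere on $J$. By absolute continuity, $f(v)-f(u)=\int_u^v f'=0$ for all $u,v\in J$. So $f$ is constant on $J$, and therefore $\LLip f(x)=0$. This contradicts $x\in F$.

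The main obstacle is the first step: passing from the pointwise information $\LLip f\le 1$ to the statement that $f$ is Lipschitz on compact subintervals of $J$. I would handle it directly. Fix $[u,v]\subseteq J$ and $\varepsilon>0$. For each $y\in[u,v]$ choose $r_y>0$ such that $f$ is $(1+\varepsilon)$-Lipschitz on $\ball[r_y]{y}$. Take a finite subcover and a chain of points $u=t_0<t_1<\dots<t_m=v$ with each consecutive pair $t_{i-1},t_i$ lying in a common ball. Then the triangle inequality gives $\abs{f(v)-f(u)}\le(1+\varepsilon)(v-u)$.

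Alternatively, the same chaining argument avoids derivatives altogether. Replace the bound $1+\varepsilon$ by $\varepsilon$ on balls centred at points of $J\setminus F$, and bound the contribution near $F\cap J$ using a cover of this null set by intervals of total length less than $\varepsilon$. On those intervals $f$ is still $(1+\varepsilon)$-Lipschitz locally. Summing over the chain gives $\abs{f(v)-f(u)}\le \varepsilon(v-u)+(1+\varepsilon)\varepsilon$. Letting $\varepsilon\to0$ shows directly that $f$ is constant on $J$.
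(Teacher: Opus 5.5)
Your proof is correct and follows the paper's argument: contradiction via condition (ii) of Proposition~\ref{prop:charact_closed_qdense_in_v_space}, then $f$ Lipschitz and hence absolutely continuous, $f'=0$ a.e.\ on the interval, so $f$ is constant there and $\LLip f(x)=0$. The only difference is that the paper cites \cite[Theorem 6.4]{ClassLip} to get that $f$ is $1$-Lipschitz, whereas you prove it directly by Lebesgue-number chaining; your derivative-free variant is also sound and avoids absolute continuity altogether.
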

\begin{proof}
    Assume that $F$ is not quasi-dense.
    By Proposition \ref{prop:charact_closed_qdense_in_v_space},
    there exist $x_0\in F$ and $U=\oinv{a}{b}\subseteq\bR$
    with $x_0\in U$ such that ${\lebme(F\cap U)=0}$.
    Let $f\colon\bR\to\bR$ be such that $\LLip f=\charfunction{F}$.
    By \cite[Theorem 6.4]{ClassLip}, $f$ is $1$-Lipschitz.
    In particular $f$ is absolutely continuous and is therefore differentiable a.e. 
    Then 
    $\abs{f'(x)}=\Lip f(x)\leq\LLip f(x)=\charfunction{F}(x)$
    for a.e. $x\in\bR$.
    Since $\lebme(F\cap U)=0$, we have $\charfunction{F}(x)=0$ for a.e. $x\in U$.
    Hence, $\abs{f'(x)}=f'(x)=0$ for a.e. $x\in U$. Since $f$ is absolutely continuous, we have
    \[
        f(x)=f(x_0)+\int_{x_0}^{x}f'(t)\diff t = f(x_0) + 0 = f(x_0),
    \]
    for any $x\in U$. That is, $f$ is constant on $U$.
    Then $\charfunction{F}(x)=\LLip f(x)=0$ for any $x\in U$, and so $F\cap U=\varnothing$.
    A contradiction, since $x_0\in F\cap U$.
\end{proof}

\section{Quasi-density in $\bR^N$}\label{sec:QDRN}

%   \begin{thm}\label{thm:small_lip_and_gamma_lip}%[{\cite[Theorem 6.4.]{ClassLip}}]
%       Let $D$ be a convex subset of $\bR^N$,
%       $f\colon D \to \bR$ be a function and let $\gamma\geq 0$. 
%       Then $f$ is $\gamma$-Lipschitz if and only
%       if $\lip f(x) \leq \gamma$ for any $x \in D$.
%   \end{thm}
%   Theorem \ref{thm:small_lip_and_gamma_lip}
%   is an immediate consequence of \cite[Theorem 6.4.]{ClassLip}.

\begin{prop}\label{prop:LLip_zero_implies_const}
    Let $G$ be an open and connected subset of $\bR^N$
    and let $f\colon G\to\bR$ be a Lipschitz function
    such that $\LLip f(x)=0$ for a.e. $x\in G$. Then $f$
    is constant on $G$.
\end{prop}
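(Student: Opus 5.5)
The plan is to reduce to the one-dimensional argument used in the proof of Theorem \ref{thm:LLip_on_IR_is_q_dense}, using Rademacher's theorem in place of absolute continuity on the line. First I show that $f$ is locally constant on $G$. Since $G$ is open and connected, a locally constant function on $G$ is constant: for fixed $x_0\in G$ the set $\set{x\in G}{f(x)=f(x_0)}$ is open, and by continuity of $f$ it is also relatively closed, hence it equals $G$. So it suffices to fix $x\in G$ and a ball $B=\ball[r]{x}\subseteq G$ and prove that $f$ is constant on $B$.

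Since $f$ is Lipschitz, Rademacher's theorem makes $f$ differentiable at $\lebme_N$-almost every point of $B$. At any point $y$ of differentiability and any unit vector $e$, we have $\abs{\nabla f(y)\cdot e}\le\Lip f(y)\le\LLip f(y)$. Hence $\nabla f(y)=0$ for a.e. $y\in B$, namely on the full-measure set where $f$ is differentiable and $\LLip f(y)=0$. It remains to integrate this along segments.

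Fix $u,v\in B$ and a direction. By Fubini, for a.e. translate of the segment $[u,v]$ (segments $[u+w,v+w]$ with $w$ small in the hyperplane orthogonal to $v-u$, still contained in $B$ by convexity and openness), the one-dimensional restriction $t\mapsto f(u+w+t(v-u))$ is Lipschitz. Its derivative equals $\nabla f\cdot(v-u)=0$ for a.e. $t$. By absolute continuity in one variable, exactly as in the proof of Theorem \ref{thm:LLip_on_IR_is_q_dense}, we get $f(u+w)=f(v+w)$ for a.e. small $w$. Letting $w\to 0$ along such a sequence and using continuity of $f$ gives $f(u)=f(v)$.

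The main obstacle is this Fubini step: the null set where $\nabla f\neq 0$ might contain the particular segment $[u,v]$, so we cannot integrate along it directly and must pass through nearby segments. The translation argument above handles this.

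An alternative avoids Fubini. Mollify to obtain $f_\varepsilon=f*\varphi_\varepsilon$. Its gradient is $\nabla f*\varphi_\varepsilon=0$, since the distributional gradient of a Lipschitz function is its a.e. gradient. So each $f_\varepsilon$ is constant on a slightly smaller ball, and $f_\varepsilon\to f$ uniformly. I would present whichever is shorter, likely the mollification route.
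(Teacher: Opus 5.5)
Your proposal is correct, and your first (Fubini) route has the same skeleton as the paper's proof. Both slice by lines, use one-dimensional absolute continuity on almost every slice, recover the exceptional slices by density and continuity of $f$, and finish with the open--closed argument in the connected set $G$.

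The differences are in the details. The paper works on a box and slices along the coordinate axes. It applies the case $N=1$ directly to $f^{x}(y)=f(x,y)$, using the elementary bound $\LLip f^{x}(y)\le\LLip f(x,y)$. This gives constancy in each variable separately, and the product structure of the box then gives constancy on the box.

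You instead first invoke Rademacher's theorem to get $\nabla f=0$ a.e. You then slice in the direction $v-u$ inside a ball, which yields $f(u)=f(v)$ in one pass and avoids the variable-by-variable step. Rademacher is avoidable, though. The local Lipschitz derivative of $t\mapsto f(u+w+t(v-u))$ at $t$ is at most $\abs{v-u}\,\LLip f(u+w+t(v-u))$, so the one-dimensional case applies to the restriction directly, exactly as in the paper.

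Your mollification alternative is genuinely different and shorter to write. However, it outsources the real work to the identification of the distributional gradient of a Lipschitz function with its a.e.\ pointwise gradient. That fact is itself proved via Rademacher's theorem or by precisely this kind of Fubini slicing. The paper's argument is therefore the more elementary and self-contained of the two, and the more natural one to present here.
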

\begin{proof}
    We start with the case $N=1$. 
    We can assume, that $G=\oinv{a}{b}$
    for some reals $a<b$.
    Since $f$ is absolutely continuous, by Lebesgue's theorem $f$ is differentiable a.e.
    and for any $a < x\leq y < b$,
    \[
       \abs{f(x)-f(y)}\leq\int_{x}^{y}\abs{f'(t)}\diff\lebme( t)=\int_{x}^{y}\Lip f(t)\diff\lebme( t) \leq \int_{x}^{y}\LLip f(t)\diff\lebme( t)= 0,
    \]
    so, $f(x)=f(y)$.

    Now, assume that the proposition was proven for some positive integer $N-1$.
    We will show, that the thesis holds for $N$.
    First, consider the case where $G=\prod\limits_{k=1}^{N}\oinv{a_k}{b_k}$.
    Denote $X=\prod\limits_{k=1}^{N-1}\oinv{a_k}{b_k}$ and $Y=\oinv{a_N}{b_N}$,
    so $G=X\times Y$.
    By the assumption, there exists a set $E\subseteq G$ such that $\lebme_{N}(E)=0$
    and $\LLip f(x)=0$ for $x\in G\setminus E$. 
%   Set $G$ is of a form $X\times Y$, where $X=\prod\limits_{k=1}^{N-1}\oinv{a_k}{b_k}$
%   and $Y=\oinv{a_N}{b_N}$.
    By the Fubini theorem
    \begin{align*}
        0 = \lebme_{N}(E) &= \int_{G}\charfunction{E}(t)\diff\lebme_{N}(t) 
        = \int_{X}\left(\int_{Y}\charfunction{E}(x,y)\diff\lebme_{1}(y)\right)\diff\lebme_{N-1}(x) \\
        &= \int_{X}\lebme_{1}(E^{x})\diff\lebme_{N-1}(x),
    \end{align*}
    where $E^{x}=\set{y\in Y}{(x,y)\in X\times Y}$.
    This means, that there exists a set $A\subseteq X$ of measure zero
    such that $\lebme_{1}(E^{x})=0$ for $x\in A$.
    Fix $x\in X\setminus A$.
    We can consider a function $f^{x}\colon Y\to\bR$ defined by
    \[
        f^{x}(y)=f(x,y) \text{ for } y\in Y.
    \]
    For any $y\in Y\setminus E^{x}$, $\LLip f^{x}(y)=0$, so
    $\LLip f^{x}(y)=0$ for a.e. $y\in Y$.
    By the previously proven case for $N=1$, the function $f^{x}$
    is constant on $Y$. Thus,
    \[
        f(x,y_1)=f(x,y_2), \; \text{ for any } x\in X\setminus A, \; y_1,y_2\in Y
    \]
    On the other hand, since $A\subseteq X$ is a null set,
    $X\setminus A$ is dense in $X$. Hence,
    $f(x,y_1)=f(x,y_2)$ for any $y_1,y_2\in Y$ and any $x$ in $X$.
    We have proven that $f$ is constant with respect to the last
    variable. Analogous reasoning shows that $f$ is constant
    with respect to any variable. Thus, $f$ is constant on $G$.

    Now consider an arbitrary open connected set $G$.
    Let $a\in G$ and denote
    $H=\set{x\in G}{f(x)=f(a)}$.
    Then, $H$ is closed in $G$.
    To prove that $H$ is open, consider some point
    $x\in G$. Then there exists a neighborhood
    $U=\prod\limits_{k=1}^{N}\oinv{a_k}{b_k}\subseteq G$
    of $x$. Then $f$ is constant on $U$ by the previous case.
    So, $f(u)=f(x)=f(a)$ for any $u\in U$. Therefore, $U\subseteq H$.
    Thus $H$ is closed and open nonempty set in the connected
    set $G$. This implies $H=G$.
\end{proof}

\begin{thm}\label{thm:LLip_is_q_dense}
    If $F$ is a local Lipschitz one set in $\bR^N$
    then $F$ is quasi-dense.
\end{thm}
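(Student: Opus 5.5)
The plan is to argue by contradiction, in the same way as in the proof of Theorem~\ref{thm:LLip_on_IR_is_q_dense}, with Proposition~\ref{prop:LLip_zero_implies_const} replacing the one-dimensional absolute continuity argument. Let $f\colon\bR^N\to\bR$ be continuous with $\LLip f=\charfunction{F}$.

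The first step is to check that $F$ is closed, so that Proposition~\ref{prop:charact_closed_qdense_in_v_space} applies in the Vitali space $(\bR^N,\lebme_N)$. I would show that $\LLip f$ is upper semicontinuous.
\begin{itemize}
\item Fix $x$ and $r>0$. If $\mtr[]{y}{x}<r/2$, then $\ball[r/2]{y}\subseteq\ball[r]{x}$.
\item Hence $\LLip f(y)\le \sup_{u\ne v\in\ball[r]{x}}\abs{f(u)-f(v)}/\mtr[]{u}{v}$.
\item Taking the infimum over $r$ gives $\limsup_{y\to x}\LLip f(y)\le\LLip f(x)$.
\end{itemize}
Therefore $F=\set{x}{\LLip f(x)\ge 1}$ is closed.

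Now suppose $F$ is not quasi-dense. By condition (ii) of Proposition~\ref{prop:charact_closed_qdense_in_v_space}, there are $x_0\in F$ and an open set $U\ni x_0$ with $\lebme_N(F\cap U)=0$. Shrinking $U$, we may take it to be an open ball $\ball{x_0}$, which is convex and connected.

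On $U$ we have $\lip f\le\LLip f\le 1$ everywhere. By \cite[Theorem 6.4]{ClassLip}, exactly as in the proof of Theorem~\ref{thm:LLip_on_IR_is_q_dense}, $f$ is $1$-Lipschitz on the convex set $U$. Moreover $\LLip f=\charfunction{F}=0$ on $U\setminus F$, that is, for $\lebme_N$-a.e.\ point of $U$. Since $U$ is open, $\LLip (f|_U)=\LLip f$ on $U$. Proposition~\ref{prop:LLip_zero_implies_const} then shows that $f$ is constant on $U$.

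A function constant on a neighbourhood of $x_0$ has $\LLip f(x_0)=0$. This contradicts $\LLip f(x_0)=\charfunction{F}(x_0)=1$, and so $F$ is quasi-dense.

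The main point needing care is the Lipschitz step: we must pass from the pointwise bound $\LLip f\le1$ to a global Lipschitz bound on $U$, which is needed to invoke Proposition~\ref{prop:LLip_zero_implies_const}. I would rely on the cited result for convex domains. Alternatively, one can argue directly: on a segment $[u,v]\subseteq U$, cover it by finitely many balls on which $f$ is $(1+\varepsilon)$-Lipschitz, which gives $\abs{f(u)-f(v)}\le(1+\varepsilon)\mtr[]{u}{v}$.
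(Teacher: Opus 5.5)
Your proof is correct and follows the paper's argument. You negate condition (ii) of Proposition~\ref{prop:charact_closed_qdense_in_v_space} to obtain a ball $U\ni x_0$ with $\lebme_N(F\cap U)=0$, then use Proposition~\ref{prop:LLip_zero_implies_const} to make $f$ constant on $U$, contradicting $\LLip f(x_0)=1$. The only difference is that you explicitly verify two hypotheses the paper's proof uses tacitly: that $F$ is closed (via upper semicontinuity of $\LLip f$), and that $f$ is Lipschitz on $U$, as Proposition~\ref{prop:LLip_zero_implies_const} requires.
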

\begin{proof}
    Assume that $F$ is not quasi-dense. Then, by Proposition
    \ref{prop:charact_closed_qdense_in_v_space}$(ii)$, there exists a point $x_0\in F$
    and an open set $U\subseteq\bR^N$ such that $\lebme_{N}(U\cap F)=0$.
    We can assume that $U=\ball[r]{x_0}$ for some $r>0$.
    Then $\LLip f(x)=\charfunction{F}(x)=0$ for any $x\in U\setminus F$.
    Consequentially, $\LLip f(x)=0$ a.e.~on $U$.
    Thus, by Proposition \ref{prop:LLip_zero_implies_const}, $f$ is constant on $U$.
    %so $\LLip f(x)=0$ for any $x\in U$.
    Hence, $\charfunction{F}(x)=0$ for any $x\in U$.
    But this means, that $U\cap F=\varnothing$ -- a contradiction.
\end{proof}

 %  \begin{proof}[Second version of the proof of Theorem \ref{thm:LLip_is_q_dense}]
 %      For $N=1$ this was already proven as Theorem \ref{thm:LLip_on_IR_is_q_dense}.
 %      Assume, that thesis is true for some positive integer $N-1$.
 %      Let $F\subseteq\bR^{N}$ be a $\LLip$ one set.
 %      Assume, that $F$ is not quasi-dense. Then, by Proposition
 %      \ref{prop:charact_closed_qdense_in_v_space}$(ii)$, there exists an
 %      open set $U\subseteq\bR^N$ such that $\lebme_{N}(U\cap F)=0$. 
 %      Denote $E=U\cap F$ and $X=\projection_{\bR^{N-1}}(E)$.
 %      Then, by Fubini's theorem,
 %      \[
 %          0=\lebme(E)=\int_{E}\diff\lebme_{N}=\int_{X}\lebme(E^{x})\diff\lebme(x).
 %      \]
 %      So, there exists a subset $A$ of $\bR^{N-1}$ of measure zero, such that
 %      $\lebme(E^{x})=0$ for any $x\in X\setminus A$.
 %      Fix $x\in X\setminus A$. Let $f\colon\bR^N\to\bR$ be a function such that
 %      $\LLip f=\charfunction{F}$.
 %  \end{proof}

%  \begin{prop}
%      Let $f\colon\bR^2\to\bR$ be a $1$-Lipschitz function, such that
%      $\dfrac{\partial f}{\partial x}=\dfrac{\partial f}{\partial y}=0$
%      almost everywhere on $\bR^2$. Then $f$ is a constant function.
%  \end{prop}
%  

Quasi-density turns out to be insufficient even
for a subset of $\bR^2$ to be a local Lipschitz one set.
To show this, we need the following
\begin{prop}\label{prop:LLip_one_disconnected}
    Let $F$ be a nonempty nowhere dense local Lipschitz one subset of $\bR^2$.
    Then $\bR^2\setminus F$ is a disconnected set.
\end{prop}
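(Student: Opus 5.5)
We argue by contradiction: suppose $G=\bR^2\setminus F$ is connected. Let $f\colon\bR^2\to\bR$ be continuous with $\LLip f=\charfunction{F}$. The plan is to show that $f$ is constant on all of $\bR^2$. This would give $\LLip f\equiv 0$, which is impossible because $F\neq\varnothing$ forces $\LLip f(x)=1$ at points of $F$.

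First, the regularity of $f$. Since $\LLip f\le 1$ everywhere and $\lip f\le\Lip f\le\LLip f$, we get $\lip f\le 1$ on the convex set $\bR^2$. By \cite[Theorem 6.4]{ClassLip} (invoked in the same way as in the proof of Theorem~\ref{thm:LLip_on_IR_is_q_dense}), $f$ is then $1$-Lipschitz on $\bR^2$. Next, $F$ is closed: $f$ is continuous and $\LLip f$ is upper semicontinuous by \eqref{eq:LLipAlternateDefinition}, so $F=\set{x}{\LLip f(x)\ge 1}$ is closed. Consequently $G$ is open, and since $F$ is nowhere dense, $G$ is dense in $\bR^2$. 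On $G$ we have $\LLip f=0$ at every point, so the restriction $f|_G$ is a Lipschitz function on an open connected set whose local Lipschitz derivative vanishes everywhere. Proposition~\ref{prop:LLip_zero_implies_const} then gives $f|_G\equiv c$ for some constant $c$. One small point here: $\LLip$ of the restriction at $x\in G$ coincides with $\LLip f(x)$, because small balls around $x$ lie inside the open set $G$.

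Finally, since $G$ is dense and $f$ is continuous, $f\equiv c$ on $\bR^2$. Hence $\LLip f\equiv0$, contradicting $\LLip f=1$ on the nonempty set $F$. Therefore $G$ is disconnected.

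The main obstacle is justifying that $f$ is globally Lipschitz, which Proposition~\ref{prop:LLip_zero_implies_const} needs. If the cited theorem is not applicable directly, I would prove it by hand. For $u,v\in\bR^2$, parametrize the segment $[u,v]$. For each point on it, $\LLip f\le 1$ gives a ball on which $f$ is $(1+\varepsilon)$-Lipschitz. Compactness of the segment yields a finite chain of such balls, and summing the estimates along the chain gives $\abs{f(u)-f(v)}\le(1+\varepsilon)\abs{u-v}$. Letting $\varepsilon\to0$ shows $f$ is $1$-Lipschitz. The remaining steps are routine once this is in place.
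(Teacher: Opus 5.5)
Your proof is correct and has the same structure as the paper's: assume $\bR^2\setminus F$ is connected, show $f$ is constant there, extend by density and continuity, and contradict $\LLip f=1$ on $F\neq\varnothing$. The only difference is that the paper gets constancy on $G$ directly from $\LLip f=0$ at every point of $G$, which makes $f$ locally constant (your own chain argument with arbitrarily small $\varepsilon$ on balls inside $G$ shows this), so your detour through global Lipschitzness and Proposition~\ref{prop:LLip_zero_implies_const} is valid but heavier than needed, while your explicit check that $F$ is closed fills in a point the paper leaves implicit.
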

\begin{proof}
    Denote by $G$ the open set $\bR^2\setminus F$. By the definition $\LLip f = 0$ on $G$
    and hence, $f$ is locally constant on $G$. 
    Suppose that $G$ is connected. Then, $f\vert_{G}$ is constant.
    Since $F$ is nowhere dense in $\bR^2$, then $\closure{G}=\bR^2$.
    Hence $f=f\vert_{G}$ on $\bR^2$. This means that $f$ is constant 
    and so, $\LLip f = 0$ on $\bR^2$, which is a contradiction.
\end{proof}

\begin{exampl}
    We give an example of a quasi-dense subset of 
    $\bR^2$ that is not a local Lipschitz one set.
    
    Let $\mathbf{C}$ be a symmetric fat Cantor set,
    as in Example \ref{ex:FCS}.
    Then, it is easy to see that $\mathbf{C}^2$ is quasi-dense. 
    However, $\bR^2\setminus\mathbf{C}^2$ is a
    connected set, so $\mathbf{C}^2$ cannot be a local Lipschitz one
    set by Proposition \ref{prop:LLip_one_disconnected}.
    \qed
\end{exampl}
%Put $F=\mathbf{C}^2$. Then $\bR^2\setminus F$ is a connected set,
%so $F$ cannot be a local Lipschitz one set.
%$\set{a\in\bR^2}{\norm{a}\in\mathbf{C}}$.
%   We will show, that $\bR^2\setminus F$ is a connected set.
%   Fix $p_1=(x_1,y_1)$, $p_2=(x_2,y_2)\in\bR^2\setminus F$
%   such that $p_1\neq p_2$.

\section{Regular closed sets are local Lipschitz one sets}\label{sec:RCS}

Recall, that a subset $F$ of a metric space $X$ is called regular closed if
$F=\closure{\interior F}$.
The main result of this note is given by the following
\begin{thm}\label{thm:regular_closed_lip_set}
    Every regular closed subset of a separable normed space is a local Lipschitz one set.

   % Let $X$ be a separable normed space. and let $F\subseteq X$ be a
  %  regular closed set.
   % Then,  % there exists a continuous function $f\colon F\to\bR$ 
    % such that $\LLip f=\charfunction{F}$.
\end{thm}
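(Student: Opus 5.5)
The plan is to build $f$ explicitly as a sum of ``distance-to-the-boundary'' tent functions supported on a countable family of pairwise disjoint open balls whose union is dense in $\interior F$. The tents have disjoint supports, so $f$ is $1$-Lipschitz, which gives $\LLip f\le 1$ everywhere. The value $\LLip f=1$ on $F$ will be witnessed along radial segments near the boundary spheres of the balls.

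\emph{Step 1 (the balls).} Since $X$ is separable, fix a countable dense set $\{d_k\}_{k\in\bN}$ in $\interior F$; if $\interior F=\varnothing$ then $F=\varnothing$ and $f=0$ works. Proceed inductively. If $d_k$ lies outside the closure of the union of the balls chosen so far, choose $B_k=\ball[r_k]{d_k}$ with $r_k>0$ so small that $\cball[r_k]{d_k}\subseteq\interior F$ and $\cball[r_k]{d_k}$ is disjoint from the finitely many closed balls already chosen. This is possible since $d_k$ has positive distance from that finite union of closed balls. Otherwise skip $d_k$. Then every $d_k$ lies in $\closure{\bigcup_n B_n}$, so $\bigcup_n B_n$ is dense in $\interior F$, and therefore dense in $F=\closure{\interior F}$.

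\emph{Step 2 (the function).} Put $h_n(x)=\distance(x,X\setminus B_n)=\max\{0,\,r_n-\norm{x-d_n}\}$; the equality holds because we are in a normed space. Define $f=\sum_n h_n$, which pointwise has at most one nonzero term. The claim is that $f$ is $1$-Lipschitz. For $u,v$ in the same ball, or with one of them outside all balls, this is immediate from each $h_n$ being $1$-Lipschitz. For $u\in B_n$, $v\in B_m$ with $n\ne m$, we use convexity. The segment $[u,v]$ leaves $B_n$ at some point $p$ and enters $B_m$ at some point $q$, with $p$ preceding $q$ on the segment. Then
\[
\norm{u-v}\ge\norm{u-p}+\norm{q-v}\ge h_n(u)+h_m(v)\ge\abs{f(u)-f(v)}.
\]
Hence $\LLip f\le 1$ everywhere, and $f$ is continuous.

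\emph{Step 3 (values of $\LLip f$).} If $x\notin F$, then since $F$ is closed some ball $\ball{x}$ misses $F\supseteq\bigcup_n B_n$. So $f=0$ there and $\LLip f(x)=0$. If $x\in F$, fix $r>0$. By density, $\ball{x}$ meets some $B_n$. Either $B_n\subseteq\ball{x}$, or, by connectedness of $\ball{x}$, the ball $\ball{x}$ contains a point $y$ with $\norm{y-d_n}=r_n$. In both cases $\ball{x}$ contains some $y$ on the sphere $\norm{y-d_n}=r_n$, together with the short radial segment $y+s(d_n-y)/r_n$, $0\le s<\delta$. On this segment $f$ equals $s$ exactly, so the difference quotients are $1$. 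Hence the supremum in \eqref{eq:LLipDefinition} is at least $1$ for every $r$, and $\LLip f(x)=1$.

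The main obstacle is Step 2: gluing tents on different balls without the slopes adding up. This is exactly why disjointness and the segment/convexity argument, and hence the normed structure, are needed. Separability is used only to make the family of balls countable in Step 1.
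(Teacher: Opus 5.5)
Your route is essentially the paper's: disjoint open balls whose union is dense in $F$, the tents $\max\{0,r_n-\norm{x-d_n}\}$, and the segment/convexity argument of Lemma~\ref{lem:U_n_Lipschitz} for the $1$-Lipschitz bound. There are two differences. You replace Lemma~\ref{lem:countable_disjoint_family} by a direct greedy selection, which is correct and even gives disjoint closed balls inside $\interior F$. And instead of using upper semicontinuity of $\LLip f$, you try to exhibit slope-one witnesses directly in every ball $\ball{x}$. Steps 1 and 2 are fine.

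Step 3, however, does not work as written. From $\ball{x}\cap B_n\neq\varnothing$, connectedness gives two alternatives: either $\ball{x}$ meets the sphere $\norm{y-d_n}=r_n$, or $\ball{x}\subseteq B_n$. It does not give $B_n\subseteq\ball{x}$. The second alternative is exactly what happens at every $x\in B_n$ once $r$ is small. In that case $\ball{x}$ contains no point of the sphere, so your claim ``in both cases $\ball{x}$ contains some $y$ on the sphere'' fails. You have therefore produced no witness at the points of $\bigcup_n B_n$ themselves. Your stated alternative $B_n\subseteq\ball{x}$ does not yield a sphere point either; take $B_n=\ball{x}$.

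The repair is easy and needs neither spheres nor connectedness. Take any $z\in\ball{x}\cap B_n$ and write $z=d_n+t_0e$ with $\norm{e}=1$ and $0\le t_0<r_n$. Then $f(d_n+te)=h_n(d_n+te)=r_n-t$ for $0\le t<r_n$. For $t\ge 0$ close to $t_0$ these points stay in the open ball $\ball{x}$, so two of them give a difference quotient of exactly $1$. This is the content of the paper's Lemma~\ref{lem:fmax_Lip_ball}: $\LLip f_n=1$ on all of $\closure{U_n}$, not only on the boundary. The paper then passes to $F=\closure{\bigcup_n U_n}$ by upper semicontinuity, which your density argument inlines. With this replacement your proof is complete.
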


We will split the main steps of the proof into lemmas, starting with the following
\begin{lemm}\label{lem:U_n_Lipschitz}
    Let $(U_n)_{n=1}^{\infty}$ be a sequence of open convex
    pairwise disjoint subsets of a normed space $X$ and
    let $F=\closure{\bigcup_{n=1}^{\infty}U_n}$.
    Consider the sequence of $1$-Lipschitz functions $f_n\colon X\to\bR$, such that
    $
        f_n=0 \text{ on } X\setminus U_n,
    $
    and $\LLip f_n=1$ on $U_n$ for any $n\in\bN$.
    Then the function $f\colon X\to\bR$ defined by the formula
    $f=\sum\limits_{n=1}^{\infty}f_n$ is a $1$-Lipschitz function,
    such that $\LLip f=\charfunction{F}$.
\end{lemm}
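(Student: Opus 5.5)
The plan has three steps: show $f$ is well defined and $1$-Lipschitz, compute $\LLip f$ off $F$ and on the open sets $U_n$, and then handle boundary points of $F$, which I expect to be the main obstacle.

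\emph{Step 1: $f$ is $1$-Lipschitz.} Since the $U_n$ are pairwise disjoint and $f_n$ vanishes off $U_n$, at each point at most one summand is nonzero. So $f$ is well defined, and $f=f_n$ on $U_n$, $f=0$ off $\bigcup_n U_n$. Take $u,v\in X$; by symmetry there are three cases.
\begin{enumerate}
\item If $u,v$ both lie in the same $U_n$, then $\abs{f(u)-f(v)}=\abs{f_n(u)-f_n(v)}\le\norm{u-v}$.
\item If $u\in U_n$ and $v\notin U_n$, I use the segment $[u,v]$. It is connected, so it meets $\partial U_n$ at some point $w$. Then $w\notin U_n$ because $U_n$ is open, and by convexity $U_n\cap[u,v]$ is a subsegment starting at $u$. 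Hence $f_n(w)=0$ and $\abs{f_n(u)}\le\norm{u-w}$. Likewise, if $v\in U_m$ with $m\ne n$, pick $w'\in\partial U_m\cap[u,v]$ with $\abs{f_m(v)}\le\norm{v-w'}$. The points $u,w,w',v$ lie on the segment in this order, since the traces of $U_n$ and $U_m$ on the segment are disjoint subintervals. Therefore
\[
\abs{f(u)-f(v)}\le\abs{f(u)}+\abs{f(v)}\le\norm{u-w}+\norm{w'-v}\le\norm{u-v}.
\]
\item If neither point lies in $\bigcup_n U_n$, the difference is $0$.
\end{enumerate}
In all cases $f$ is $1$-Lipschitz, so $\LLip f\le 1$ everywhere.

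\emph{Step 2: points off $F$ and points in $\bigcup U_n$.} If $x\notin F$, then since $F$ is closed some ball around $x$ misses $\bigcup U_n$. Hence $f\equiv0$ there and $\LLip f(x)=0$. If $x\in U_n$, then $U_n$ is open, so small balls around $x$ lie in $U_n$ where $f=f_n$, and $\LLip f(x)=\LLip f_n(x)=1$.

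\emph{Step 3: boundary points $x\in F\setminus\bigcup U_n$.} This is the main obstacle, though it should follow from the alternate formula \eqref{eq:LLipAlternateDefinition}. Fix $r>0$. The ball $\ball[r/2]{x}$ meets some $U_n$ at a point $y$. Since $\LLip f_n(y)=1$, there are $u\ne v$ in $\ball[\rho]{y}\subseteq U_n\cap\ball[r]{x}$, with $\rho<r/2$ small, such that
\[
\frac{\abs{f(u)-f(v)}}{\norm{u-v}}>1-\varepsilon.
\]
Thus the supremum over $\ball[r]{x}$ is at least $1-\varepsilon$ for every $r$ and every $\varepsilon$, which gives $\LLip f(x)\ge1$. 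Combined with the upper bound from Step 1, $\LLip f(x)=1$. Together the three steps give $\LLip f=\charfunction{F}$.
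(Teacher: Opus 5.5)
Your proof is correct and follows the paper's approach. The $1$-Lipschitz bound uses the same case split and the same segment-and-convexity trick, and $\LLip f$ is computed the same way off $F$ and on each $U_n$. The only difference is at points of $F\setminus\bigcup_n U_n$. There you approximate directly from nearby points of some $U_n$, while the paper cites upper semicontinuity of $\LLip f$ to conclude that $\set{x\in X}{\LLip f(x)\geq 1}$ is closed. Your argument amounts to a self-contained proof of that semicontinuity in this special case.
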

\begin{proof}
    %\sloppy
    We will show that $f$ is $1$-Lipschitz.
    For this purpose fix $x,y \in X$. 
    Let $G=\bigcup_{n=1}^{\infty}U_n$. We will consider three cases:

    \textbf{Case 1:} $x,y\notin G$. 
    In this case, $f(x)=f(y)=0$, so ${\abs{f(x)-f(y)}\leq\norm{x-y}}$.
    
    \textbf{Case 2:} $x\in G$, $y\notin G$.
    There is some $n$, such that $x\in U_n$, and so, $f(x)=f_n(x)$.
    On the other hand $f(y)=0=f_n(y)$. 
    Then,
        $\abs{f(x)-f(y)}=\abs{f_n(x)-f_n(y)}\leq\norm{x-y}$,
  % \[
  %     \abs{f(x)-f(y)}=\abs{f_n(x)-f_n(y)}\leq\norm{x-y},
  % \]
    since $f_n$ is $1$-Lipschitz.
        
    \textbf{Case 3:} $x,y\in G$. 
    We have $x\in U_n$ and $y\in U_m$ for some positive integers $n$ and $m$.
    Let us consider further two cases.
    
    $1^\circ$ If $n=m$, then $\abs{f(x)-f(y)}=\abs{f_n(x)-f_n(y)}\leq\norm{x-y}$.
    
    $2^\circ$ If $n\neq m$, then by the assumption, $U_n\cap U_m=\varnothing$. Let
    $\varphi(t)=(1-t)x+ty$, $t\in\cinv{0}{1}$ and $L=\varphi\big[\cinv{0}{1}\big]$.
    Since $\varphi$ is affine, the set $I_k=\varphi^{-1}[U_k]$ is
    convex for any $k\in\bN$. The sets $I_n$ and $I_m$ are disjoint and open
    in $\cinv{0}{1}$. Therefore, for some $0<s_0\leq t_0<1$, $I_n=\coinv{0}{s_0}$
    and $I_m=\ocinv{t_0}{1}$. Let $z=\varphi(t_0)$. We have
    $z\in L\setminus(U_n\cup U_m)$, since $t_0\notin I_n\cup I_m$.
    This implies that $f_n(z)=f_m(z)=0$.
    %$L=\set{(1-t)x+ty}{t\in\cinv{0}{1}}$.
    We can now estimate:
    \begin{align*}
        \abs{f(x)-f(y)} &= \abs{f_n(x)-f_m(y)} \\
        & \leq \abs{f_n(x)-f_n(z)}+\abs{f_m(z)-f_m(y)} \\
        & \leq \norm{x-z} + \norm{z-y} = \norm{x-y}.
    \end{align*}
    We have shown that $f$ is a $1$-Lipschitz function and as such,
    ${\LLip f(x)\leq 1}$, $x\in X$.
    Note that $\LLip f(x)=0$ for $x\in X\setminus F$, because $f(x)=0$ for such $x$.
    Consider $x\in G$. Then there exists $n\in\bN$ for which $x\in U_n$.
    We have
    $
        \LLip f(x) = \LLip f_n(x)=1.
    $
    %and, since $U_n$ is neighborhood of $x$...
    Let $E=\set{x\in X}{\LLip f(x) \geq 1}$.
    Obviously, the set $E$ is closed, since $\LLip f$ is upper-semicontinuous (see \cite[Theorem 5.5]{ClassLip}).
    On the other hand, $E=\set{x\in X}{\LLip f(x)=1}$, since $\LLip f(x)\leq 1$.
    By the definition of $E$ and properties of $f$, we have the inclusions
    $G\subseteq E\subseteq F$.
    Hence, $\closure{G}\subseteq E\subseteq F$. But $\closure{G}=F$ by the definition.
    Thus, $F=E$, and we have shown that $\LLip f=\charfunction{F}$.
\end{proof}

The following topological lemma follows for example from
\cite[Lemat 1.4.8]{Blaszcz}.
\begin{lemm}\label{lem:countable_disjoint_family}
    Let $X$ be a topological space, let $\cB$ be a countable
    base of $X$ and let $F$ be a regular closed subset of $X$.
    Then there exists a disjoint family $\cB_0\subseteq\cB$
    such that $F=\closure{\bigcup\cB_0}$.
\end{lemm}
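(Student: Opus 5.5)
The plan is a Zorn's lemma (or, since $\cB$ is countable, an inductive enumeration) argument applied inside $\interior F$, followed by a density argument. Enumerate $\cB=\{B_1,B_2,\ldots\}$, discarding empty sets. Build $\cB_0$ greedily: go through $k=1,2,\ldots$ and add $B_k$ to $\cB_0$ exactly when $B_k\subseteq\interior F$ and $B_k$ is disjoint from every set already chosen. By construction $\cB_0\subseteq\cB$ is a pairwise disjoint family of open subsets of $\interior F$. Moreover it is maximal among disjoint subfamilies of $\set{B\in\cB}{B\subseteq\interior F}$. Indeed, if $B_k\subseteq\interior F$ was rejected, it meets some earlier chosen set.

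Since every member of $\cB_0$ lies in $\interior F\subseteq F$ and $F$ is closed, we get $\closure{\bigcup\cB_0}\subseteq F$ at once.

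For the reverse inclusion, use regularity: $F=\closure{\interior F}$. It therefore suffices to show $\interior F\subseteq\closure{\bigcup\cB_0}$. Fix $x\in\interior F$ and an open neighborhood $V$ of $x$. We must show that $V$ meets $\bigcup\cB_0$.

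The set $V\cap\interior F$ is open and nonempty, since it contains $x$. As $\cB$ is a base, there is $B_k\in\cB$ with $x\in B_k\subseteq V\cap\interior F$. Either $B_k\in\cB_0$, or $B_k$ was rejected. Being contained in $\interior F$, it could only have been rejected because it meets some chosen $B_j\in\cB_0$. In both cases $V\supseteq B_k$ meets $\bigcup\cB_0$. Hence $x\in\closure{\bigcup\cB_0}$, so $\closure{\interior F}\subseteq\closure{\bigcup\cB_0}$, and $F=\closure{\bigcup\cB_0}$.

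There is no serious obstacle. The only point needing care is the maximality property of the greedy choice: a rejected basic set inside $\interior F$ must intersect a chosen one. This holds because rejection happens only when there is an intersection with an earlier chosen set. Countability of $\cB$ is used only to make the construction a simple recursion; without it one would invoke Zorn's lemma for a maximal disjoint subfamily, and the same density argument would apply.
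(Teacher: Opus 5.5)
Your proof is correct: the greedy recursion produces a maximal pairwise disjoint family of basic sets contained in $\interior F$. Every nonempty basic set inside $\interior F$ therefore meets $\bigcup\cB_0$, which gives $\interior F\subseteq\closure{\bigcup\cB_0}$, and regular closedness then yields $F=\closure{\bigcup\cB_0}$. The paper gives no argument of its own and only refers to Lemat 1.4.8 of a textbook; your maximal-disjoint-family argument is the standard proof of this fact, and, as you note, countability of $\cB$ only lets you avoid Zorn's lemma.
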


\begin{lemm}\label{lem:fmax_Lip_ball}
    Let $X$ be a normed space and $U=\ball[\varepsilon]{a}$.
    Define a function $f$ by setting
    $f(x)=\max\{0, \varepsilon-\norm{x-a}\}$ for any $x\in X$.
    Then $f$ is a $1$-Lipschitz function and $\LLip f=\charfunction{\closure{U}}$.
\end{lemm}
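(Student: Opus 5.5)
We prove the two claims separately: first that $f$ is $1$-Lipschitz, then that $\LLip f$ equals $1$ on $\closure{U}$ and $0$ off it.

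\textbf{Lipschitz bound.} The map $x\mapsto\varepsilon-\norm{x-a}$ is $1$-Lipschitz by the triangle inequality. Taking the maximum with the constant $0$ preserves this, since $\abs{\max\{0,s\}-\max\{0,t\}}\le\abs{s-t}$. Hence $f$ is $1$-Lipschitz and $\LLip f\le 1$ everywhere.

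\textbf{Outside $\closure{U}$.} If $\norm{x-a}>\varepsilon$, then $f$ vanishes on the open neighborhood $\ball[\norm{x-a}-\varepsilon]{x}$ of $x$. So every difference quotient in \eqref{eq:LLipDefinition} is zero for small $r$, and $\LLip f(x)=0$.

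\textbf{On $\closure{U}$.} Here we show $\LLip f(x)\ge 1$ by exhibiting pairs $u\ne v$ arbitrarily close to $x$ with quotient equal to $1$.

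First take $x\in U$ with $x\neq a$. Let $e=(x-a)/\norm{x-a}$ and use points on the ray $u=a+se$, $v=a+te$ with $s,t$ near $\norm{x-a}$. For these points $\norm{u-v}=\abs{s-t}$ and $f(u)-f(v)=t-s$, both points lying in $U$. So the quotient is exactly $1$.

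If $x=a$, fix any unit vector $e$ and take $u=a+se$, $v=a+te$ with $0<s<t$ small. The same computation gives quotient $1$. A nonzero normed space always contains such a unit vector.

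If $\norm{x-a}=\varepsilon$, use the same ray with $s<t<\varepsilon$ both close to $\varepsilon$. The points $u,v$ then lie in $U$, arbitrarily close to $x$, and again the quotient is $1$.

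Thus $\LLip f=1$ on $\closure{U}$. Alternatively, once $\LLip f=1$ is known on $U$, the boundary case follows from the upper semicontinuity of $\LLip f$ (\cite[Theorem 5.5]{ClassLip}), exactly as in the proof of Lemma~\ref{lem:U_n_Lipschitz}.

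The only point needing care is to avoid invoking any smoothness of the norm. Restricting to a single ray through $a$ makes the norm affine along it, so no differentiability is needed. Note also that $\closure{U}=\cball[\varepsilon]{a}$ in a normed space, which identifies the boundary points handled above.
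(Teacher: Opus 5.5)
Your proof is correct and takes essentially the same approach as the paper: the $1$-Lipschitz bound from the triangle inequality, $f\equiv 0$ near points outside $\closure{U}$, and, on $\closure{U}$, difference quotients exactly equal to $1$ along the ray from $a$ through $x$, where the norm is affine. The only difference is that the paper pairs $x$ itself with a point $y=tx+(1-t)a$ of the segment (so it in fact shows $\Lip f(x)\ge 1$), whereas you use two auxiliary points of the ray lying in $U$. Your remark that $X\neq\{0\}$ is needed is also implicitly assumed in the paper.
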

\begin{proof}
    Since the norm map $x\mapsto\norm{x}$, $x\in X$ is a $1$-Lipschitz function,
    $f$ is also $1$-Lipschitz.
    For $x\in X\setminus\closure{U}$ we have $f(x)=0$, so $\LLip f(x)=0$.
    Fix $x\in\closure{U}=\cball[\varepsilon]{a}$ and an arbitrary $r>0$.
    Consider two cases.
    
    \textbf{Case 1:} $x=a$. Let $y\in U\setminus\{a\}$, such that $\norm{x-y}<r$.
        We have
        \begin{align*}
            \abs{f(x)-f(y)} &= \abs{f(a)-f(y)} = \abs{\varepsilon - (\varepsilon - \norm{a-y})} \\
            &= \norm{y-a} = \norm{x-y}.
        \end{align*}

    \textbf{Case 2:} $x\neq a$.
      % we can choose $y$, so
      % $y=tx+(1-t)a$ for some $t\in\oinv{0}{1}$.
      % Then, $y\in U$. For any $r>0$, 
      % $\norm{y-a}=\norm{tx+a-ta-a}=\abs{t}\norm{x-a}$,
        Fix $t$ such that $1-\frac{r}{\norm{x-a}}<t<1$.
        Then, the point $y=tx+(1-t)a$ lies inside the ball $\ball[\varepsilon]{a}$.
        Note that, $\norm{y-a}=t\norm{x-a}$. %$ < \varepsilon$.
        % Note that $\norm{y-a}=\norm{xt+(1-t)a-a}=t\norm{x-a}$.
        We have
        \begin{align*}
            \norm{x-y} &= \norm{x-tx -a +ta} = \norm{x-a+t(a-x)} \\
            &= \norm{(x-a)-t(x-a)} = \norm{(x-a)(1-t)} \\
            &= \abs{1-t}\norm{x-a} < r.
        \end{align*}
        So $y\in\ball[r]{x}$. On the other hand
        \begin{align*}
            \abs{f(x)-f(y)} &= \abs{\varepsilon - \norm{x-a} - (\varepsilon - \norm{y-a})} \\
                &= \abs{\norm{x-a}-\norm{y-a}} = \abs{\norm{x-a}-t\norm{x-a}} \\
                &= \abs{1-t}\norm{x-a} = \norm{x-y}.
        \end{align*}       
        
    In both cases, $\displaystyle\frac{\abs{f(x)-f(y)}}{\norm{x-y}} = 1$ for some $y\in\ball[r]{x}$.
    Thus, ${\LLip f(x)\geq 1}$ for $x\in\closure{U}$. But, since $f$ is $1$-Lipschitz,
    we have $\LLip f(x)=1$ for $x\in\closure{U}$.
\end{proof}

\begin{proof}[Proof of Theorem \ref{thm:regular_closed_lip_set}]
%\textit{Proof ot Theorem \ref{thm:regular_closed_lip_set}.}
Let $X$ be a separable normed space and $F$ be
a regular closed subset of $X$. Then there exists a dense countable
subset $A$ of $X$. Let $\cB=\set{\ball[\frac{1}{k}]{a}}{a\in A,k\in\bN}$.
The family $\cB$ is a countable basis of $X$.
By Lemma \ref{lem:countable_disjoint_family}
we can choose a disjoint subfamily $\cB_0$ of $\cB$ such that
$F=\closure{\bigcup\cB_0}$. We can assume that
$\cB_0=\set{U_n}{n\in\bN}=\set{\ball[\varepsilon_n]{a_n}}{n\in\bN}$,
where $a_n\in A$ for each $n\in\bN$.
Fix $n\in\bN$. Let $f_n(x)=\max\{0,\varepsilon_n-\norm{x-a_n}\}$.
By Lemma \ref{lem:fmax_Lip_ball}, $f_n$ is $1$-Lipschitz function
such that $\LLip f_n=\charfunction{\closure{U_n}}$.
Hence, $\LLip f_n(x)=1$ for any $x\in U_n$.
On the other hand, by the definition of $f_n$, we have
$f_n(x)=0$ for $x\in X\setminus U_n$.
So, by Lemma \ref{lem:U_n_Lipschitz}, the function
$f=\sum_{n=1}^{\infty}f_n$ satisfies
$\LLip f=\charfunction{F}$.
\end{proof}

\begin{exampl}
    We will end with an example of a local Lipschitz one subset
    of the plane, that is not a regular closed set.
    
    Let $\mathbf{C}$ be a symmetric fat Cantor set.
    Put $$F=\set{a\in\bR^2}{\norm{a}\in\mathbf{C}}.$$
    %Then, set $F$ is a $\LLip$ one subset of $\bR^2$.
    
    Since the mapping $\varphi(a)=\norm{a}$, $a\in\bR^2$ is an open continuous mapping
    from $\bR^2$ to $\coinv{0}{+\infty}$
    and $\mathbf{C}=\varphi[F]$, the set $F$ must be nowhere dense.
    Hence, $F$ is not a regular closed set.
    
    We will show that $F$ is a local Lipschitz one set.
    By the construction from Example \ref{ex:FCS}, $\mathbf{C}$ is quasi-dense.
    Then by Theorem \ref{thm:q_dense_iff_monot_LLip_one},
    there exits a function $g\colon\bR\to\bR$ such that
    $\LLip g=\charfunction{\mathbf{C}}$. Define $f\colon\bR^2\to\bR$
    by letting $f(a)=g(\norm{a})$, $a\in\bR^2$.
    It remains to show that $\LLip f=\charfunction{F}$.
    Let $a_0\in\bR^2\setminus F$. Then $t_0:=\norm{a_0}\notin\mathbf{C}$.
    Fix $\varepsilon>0$. Since $\LLip g(t_0)=0<\varepsilon$, there exists
    $\delta>0$ such that for all $s,t\in\oinv{t_0-\delta}{t_0+\delta}$,
    \[
        \abs{g(s)-g(t)}\leq\varepsilon\abs{s-t}.
    \]
    The set $U_0=\varphi^{-1}\big[\oinv{t_0-\delta}{t_0+\delta}\big]$
    is a neighborhood of $a_0$. Consider $p,q\in U_0$ and let
    $s=\norm{p}$, $t=\norm{q}$. Then, $s,t\in\oinv{t_0-\delta}{t_0+\delta}$
    and so,
    \begin{align*}
        \norm{f(p)-f(q)} &= \abs{g(s)-g(t)} \leq \varepsilon\abs{s-t} \\
                         &= \varepsilon\big\vert\norm{p}-\norm{q}\big\vert \leq \varepsilon\norm{p-q}.
    \end{align*}
    That is, $\LLip f(a_0)\leq\varepsilon$.
    With $\varepsilon\to 0$ we obtain the equality $\LLip f(a_0)=0$.
    Note that $g$ is a $1$-Lipschitz function, so also $f$ is $1$-Lipschitz.
    Indeed, for any $x,y\in\bR$,
    \begin{align*}
        \norm{f(x)-f(y)} &= \abs{g(\norm{x})-g(\norm{y})} \\
                         &\leq \varepsilon\big\vert\norm{x}-\norm{y}\big\vert \leq \varepsilon\norm{x-y}.
    \end{align*}
    In particular, $\LLip f\leq 1$. Fix $a_0\in F$.
    We will show that $\LLip f(a_0)\geq 1$. Since $t_0:=\norm{a_0}\in F$,
    we have $\LLip g(t_0)=1$. Let $U_0$ be an open neighborhood of $a_0$ in $\coinv{0}{+\infty}$.
    Since $\varphi$ is an open mapping, $V_0=\varphi[U_0]$ is an open
    neighborhood of $t_0$. Hence, $\big\Vert g\vert_{V_0}\big\Vert_{\mathrm{lip}}\geq\LLip g(t_0)=1$.
    Fix $\alpha<1$. Then 
    \[
        \lipnorm{g\vert_{V_0}}=\sup\limits_{s,t\in V_0}\frac{\abs{g(s)-g(t)}}{\abs{s-t}}>\alpha.
    \]
    So, there exists $s,t\in V_0$ such that $\frac{\abs{g(s)-g(t)}}{\abs{s-t}}>\alpha$.
    Hence, we see that
    \[
        \LLip f(a_0)\geq\lipnorm{f\vert_{U_0}}\geq\frac{\abs{g(s)-g(t)}}{\abs{s-t}}>\alpha.
    \]
    With $\alpha\to 1$ we obtain the inequality $\LLip f(a_0)\geq 1$.
    We have proven that $\LLip f=\charfunction{F}$.
    \qed
\end{exampl}

\section*{Acknowledgments}
The author would like to thank his supervisor, Oleksandr Maslyuchenko,
for his constant support and many useful suggestions.

\end{document}